\newtheorem{theorem}{Theorem}
\newtheorem*{remark}{Remark}
\newtheorem*{definition}{Definition}
\newtheorem*{example}{Example}
\newtheorem{assumption}{Assumption}
\newlength{\IEEECOLWIDTH}
\def\TITLE{Hierarchical and Distributed Monitoring of Voltage Stability in Distribution Networks}
\def\TITLEnl{Hierarchical and Distributed Monitoring\\of Voltage Stability in Distribution Networks}
\def\AUTHORS{Liviu Aolaritei, Saverio Bolognani and Florian D\"orfler}
\def\1{\mathbf{1}}
\def\0{\mathbf{0}}
\DeclareMathOperator{\realpart}{Re}
\def\jay{\textbf{j}}
\def\t{\top}
\begin{document}

\begin{textblock*}{\textwidth}(15mm,50mm) 
\centering \bf Published on \emph{IEEE Transactions on Power Systems,} vol. 33, no. 6, pp. 6705-6714, Nov. 2018.\\\url{http://doi.org/10.1109/TPWRS.2018.2850448}
\end{textblock*}

\title{\TITLEnl}

\author{\AUTHORS
\thanks{The authors are affiliated with the Automatic Control Laboratory,
	ETH Zurich, 8092 Zurich, Switzerland. Email: \texttt{\small	\{aliviu, bsaverio, dorfler\}@ethz.ch.}}%
\thanks{This research is supported by ETH Zurich funds and the SNF Assistant Professor Energy Grant \# 160573.}%
\thanks{\textcopyright 2019 IEEE.  Personal use of this material is permitted.  Permission from IEEE must be obtained for all other uses, in any current or future media, including reprinting/republishing this material for advertising or promotional purposes, creating new collective works, for resale or redistribution to servers or lists, or reuse of any copyrighted component of this work in other works.}%
}

\maketitle

\begin{abstract}
We consider the problem of quantifying and assessing the steady-state voltage stability in radial distribution networks.
Our approach to the voltage stability problem is based on a local, approximate, and yet highly accurate characterization of the determinant of the Jacobian of the power flow equations parameterized according to the branch-flow model.
The proposed determinant approximation allows us to construct a voltage stability index that can be computed in a fully distributed or in a hierarchical fashion, resulting in a scalable approach to the assessment of steady-state voltage stability.
Finally, we provide upper bounds for the approximation error and we numerically validate the quality and the robustness of the proposed approximation with the IEEE 123-bus test feeder.
\end{abstract}

\begin{IEEEkeywords}
Voltage stability, distribution network, power flow Jacobian, power flow solvability, distributed algorithms.
\end{IEEEkeywords}

\section{Introduction}

\IEEEPARstart{F}{uture} electric power distribution grids are expected to host a larger amount of microgeneration, especially from intermittent and uncontrollable renewable sources, and to serve the higher power demand caused by ubiquitous penetration of plug-in electric vehicles.
To survive these radical changes, these grids are expected to become ``smart'', and therefore to be provided with online monitoring solutions, self-healing mechanisms, and enhanced flexibility in their operation.

One of the phenomena that occur when power flows reach or exceed the power transfer capacity of the grid, is the loss of \emph{long-term} voltage stability \cite{Loef1993,Cutsem1998}. 
Voltage instability, and ultimately voltage collapse, is a complex dynamical phenomenon that has its origins in the coupling between the nonlinearity of power flow equations and the dynamic response of the devices connected to the grid (generators, regulators, tap changers, and loads) \cite{Dobson1989,Chiang1990}.
The dynamic aspects of voltage collapse have been connected to bifurcations phenomena of the static nonlinear power flow equations, to give a quasi-static characterization of voltage stability (i.e., based on the solvability of power flow equations) in the seminal works \cite{Tamura1983,Sauer1990,Dobson2011}.
This fundamental connection has been verified on different analytical models, time domain simulations, and historical data \cite{Canizares1995}.

A natural characterization of the solvability of power flow equations involves the invertibility of their Jacobian \cite{Venikov1975}. 
Based on this idea, voltage stability can also be quantified, for example by evaluating its minimum singular value \cite{Tiranuchit1988} or smallest eigenvalue {\cite{Gao1992}}.
Many similar quantitative indices have been proposed, mostly for transmission grids. 
The resulting stability certificates can be tested via centralized algorithms that assess the distance from voltage collapse of a given operating state of the grid (which could be a measured state, the output of a state estimator, or the solution of a power flow solver). 
Many of these stability indices have been compared and contrasted based on computational complexity and accuracy in predicting voltage instability
\cite{Kessel1986,Canizares1996,Sinha2000,Glavic2011}.
With the exception of {\cite{Kessel1986}} (which however requires some additional assumptions on the generator voltages), these methods require a global knowledge of the system parameters, and they are not amenable to distributed implementation.

In this paper we focus on balanced and radial power distribution networks.
Their radiality and the absence of voltage-regulated buses allows us to adopt the branch flow model \cite{Baran1989,Farivar2013} of the network and to propose a voltage stability index that directly descends from an approximation of the determinant of the power flow Jacobian.
Our index is a physically intuitive generalization of the well-known two-bus case and its computation is extremely efficient, even in large-scale networks. The quality of the approximation can be precisely evaluated and quantified for mono-directional flows, although numerical simulations show that the latter is rather a technical assumption and not a limiting factor for the applicability of our proposed index.

We also discuss how the proposed index can be used in a scenario where a large-scale distribution grid is provided with a distributed sensing architecture. 
We embrace the challenge proposed in \cite{SimpsonPorcoTSG2016} and \cite{Aolaritei2017}, with respect to the derivation of scalable and distributed algorithms for the computation of a voltage stability index.
We show that the index proposed in this paper is suitable for hierarchical decomposition as well as efficient distributed computation, and requires limited information on the grid parameters and topology.

Few other voltage stability indices have been derived for power distribution networks.
The voltage stability index proposed in \cite{Wang2017} (and the variant in \cite{Sun2018}) is also based on the singularity of the power flow Jacobian. 
It is however a centralized method, and it requires the knowledge of the full impedance matrix of the grid and of phasorial measurements.

Explicit conditions for the solvability of bi-quadratic power flow equations at each line of the grid have been proposed in \cite{Chakravorty2001,Augugliaro2007,Eminoglu2007},
exploiting both radiality and the presence of only PQ nodes.
These local conditions can be used to infer global voltage stability indices for the distribution grid. 
Interestingly, these indices perform similarly to the index proposed in this paper, which instead is derived starting from a global solvability condition, and then decomposed into individual terms for each bus.
One advantage of our index is its explicit connection to the nonsingularity of the power flow Jacobian: some numerical algorithms (e.g., \cite{Dvijotham2015}) specifically rely on this piece of information, and the proposed index can be used to achieve significant reduction in the computation complexity (see Section~\ref{subsec:cencomp}).

Heuristic indices have been obtained by considering 2-bus equivalent models of both transmission \cite{Chebbo1992} and distribution \cite{Gubina1997} grids.
Also in this case, there is no clear connection between these local indices and a global metric of the grid's distance from voltage collapse.

Finally, our proposed approach contrasts with the methodologies that have been recently proposed to characterize those power demands that can be satisfied by a stable voltage profile \cite{Bolognani2016,SimpsonPorco2016,WangPaolone2017,Dvijotham2017,Dvijotham2018,NguyenTSG}.
The spirit of those works is fundamentally different: they provide a characterization of the set in the multidimensional parameter space (namely, in the power injection space) that corresponds to a unique stable solution of the power flow equations. 
The main merit of those methods is to do so \emph{without} solving or attempting to solve the power flow equations. 
They are typically conservative and inherently rely on a precise knowledge of the system model. 
In contrast, the scalar voltage stability index that we propose in this paper relies on state measurements, and therefore on a solution of the power flow equations, either via numerical solvers or through the physics of the grid (i.e. by performing online measurements).
It can assess the proximity to voltage collapse very accurately even in the presence of a significant parametric mismatch in the model. Moreover, it is not sensitive to the specific load distribution that brings the system close to voltage collapse, therefore simplifying its numerical interpretation in a practical setting.

The paper is structured as follows.
In Section \ref{sec:model} we present the distribution grid model that is adopted throughout the paper.
In Section~\ref{sec:analysis} we quickly recall the connection between singularity of the power flow Jacobian and voltage collapse, and we specialize this criterion for the specific grid model that we adopted.
The proposed voltage stability index is presented in Section~\ref{sec:margin},
while in Section~\ref{sec:compvsia} we describe its computational scalability, and how it can be computed in distributed and hierarchical communication architectures.
Finally, in \ref{sec:accuracy} we discuss the accuracy of the proposed index, 
and in Section \ref{sec:numerical} we present some numerical experiments to validate its effectiveness and its robustness.
Section~\ref{sec:conclusions} concludes the paper.	

\section{Distribution network model}
\label{sec:model}

Let $G = (N,E)$ be a directed tree representing a symmetric and balanced power distribution network, where each node in $N = \{0,1,...,n\}$ represents a bus, and each edge in $E$ represents a line. Note that $|E|=n$. A directed edge in $E$ is denoted by $(i,j)$ and means that $i$ is the parent of $j$. For each node $i$, let $\delta(i) \subseteq N$ denote the set of all its children.
Node $0$ represents the root of the tree and corresponds to the grid substation bus.
For each $i$ but the root $0$, let $\pi(i) \in N$ be its unique parent.

We now define the basic variables of interest.
For each $(i,j) \in E$ let $\ell_{ij}$ be the squared magnitude of the complex current from bus $i$ to bus $j$, and $s_{ij} = p_{ij} + \jay q_{ij}$ be the sending-end complex power from bus $i$ to bus $j$.
Let $z_{ij} = r_{ij} + \jay x_{ij}$ be the complex impedance on the line $(i,j)$.
For each node $i$, let $v_i$ be the magnitude squared of the complex nodal  voltage, and $s_i = p_i + \jay q_i$ be the net complex power demand (load minus generation). 

We adopt the branch flow formulation of the power flow equations in a radial grid, as proposed in \cite{Baran1989,Farivar2013}:
\begin{align*}
& p_j = p_{\pi(j)j} - r_{\pi(j)j}l_{\pi(j)j} - \sum\limits_{k \in \delta(j)}p_{jk}, && \forall j \in N\\
&q_j = q_{\pi(j)j} - x_{\pi(j)j}l_{\pi(j)j} - \sum\limits_{k \in \delta(j)}q_{jk}, && \forall j \in N\\
&v_j = v_i - 2(r_{ij}p_{ij} + x_{ij}q_{ij}) + (r_{ij}^2 + x_{ij}^2)\ell_{ij}, && \forall (i,j) \in E \\
&v_i \ell_{ij} = p_{ij}^2 + q_{ij}^2, && \forall (i,j) \in E.
\label{eq:bfm}
\end{align*}

To write the same equations in vector form, we first define the vectors $p$, $q$, and $v$, obtained by stacking the scalars $p_i$, $q_i$, and $v_i$, respectively, for $i \in N$.
Similarly we define $\overline{p}$, $\overline{q}$, $\ell$, $r$, and $x$, as the vectors obtained by stacking the scalars $p_{ij}$, $q_{ij}$, $\ell_{ij}$, $r_{ij}$, and $x_{ij}$, respectively, for $(i,j) \in E$.

In the following, we make use of the compact notation $[x]$, where $x \in \mathbb{R}^n$, to indicate the $n\times n$ matrix that has the elements of $x$ on the diagonal, and zeros everywhere else. Moreover, we use the notation $\mathbf{1}$ for the all-ones vector and $\0$ for the zero matrix of appropriate dimensions.

We define two $(0,1)$-matrices $\Pi$ and $\Delta$, where $\Pi$ $\in$ $\mathbb{R}^{n+1 \times n}$ is the matrix which selects for each row $j$ the branch $(i,j)$, where $i = \pi(j)$, and $\Delta \in \mathbb{R}^{n+1 \times n}$ is the matrix which selects for each row $i$ the branches $(i,j)$, where $j \in \delta(i)$.
Notice that $A := \Delta-\Pi$ is the incidence matrix of the graph \cite{Diestel2016}.

The branch flow equations in vector form are
\begin{equation}
\begin{split}
p &= \Pi \big(\overline{p} - [r]\ell \big) - \Delta \overline{p} \\
q &= \Pi \big(\overline{q} - [x]\ell \big) - \Delta \overline{q} \\
\Pi^\t v &= \Delta^\t v - 2 \big( [r]\overline{p} + [x]\overline{q} \big) + \big([r]^2 + [x]^2 \big) \ell \\
\left[\Delta^\t v\right] \ell &= \left[ \overline{p}\right]\overline{p} + \left[\overline{q}\right]\overline{q}.
\end{split}
\label{eq:branchflowmodelvector}
\end{equation}

We model the node $0$ as a slack bus, in which $v_0$ is imposed ($v_0 = 1$ p.u.) and all the other nodes as PQ buses, in which the complex power demand (active and reactive powers) is imposed and does not depend on the bus voltage.
Therefore, the $2n+1$ quantities $(v_0, p_1, \ldots, p_n, q_1, \ldots, q_n)$ are to be interpreted as parameters, and the branch flow model consists of $4n+2$ equations in the $4n+2$ state variables $(\overline{p},\overline{q},\ell,v_1, \ldots, v_n,p_0,q_0)$.

\section{Voltage stability analysis}
\label{sec:analysis}

From the perspective of voltage stability, we define a \emph{loadability limit} of the power system as a critical operating point of the grid (in terms of nodal power demands), where the power transfer reaches a maximum value, after which the power flow equations have no solution.
There are infinitely many loadability limits, corresponding to different demand configurations.
Ideally, the power system will operate far away from these points, with a sufficient safety margin.
On the other hand, the \emph{flat voltage solution} (of the power flow equations) is the operating point of the grid where $v_i = 1$ for all $i$, $p=q=\0$, and $\overline{p}=\overline{q}=\ell=\0$.
This point is voltage stable, all voltages are equal to the nominal voltage, and the power system typically operates relatively close to it \cite{Cutsem1998}.

In the following, we recall and formalize the standard reasoning that allows to characterize loadability limits via conditions on the Jacobian of the power flow equations, and we specialize those results for the branch flow model that we have adopted.

\subsection{Characterization of the voltage stability region}

Based on the discussion at the end of Section~\ref{sec:model}, consider the two vectors $u = \left[\overline{p}^T, \overline{q}^T, \ell^T, v_1, \ldots, v_n, p_0, q_0\right]^T \in \mathbb{R}^{4n+2}$ and $\xi = \left[v_0, p_1, \ldots, p_n, q_1, \ldots, q_n\right]^T \in \mathbb{R}^{2n+1}$ 
corresponding to the state variables and the nodal parameters, respectively.
Then the branch flow model \eqref{eq:branchflowmodelvector} can be expressed in implicit form as
\begin{equation*}
	\varphi(u, \xi) = \0.
\end{equation*}

A loadability limit is formally defined as the maximum of a scalar function $\gamma(\xi)$ (to be interpreted as a measure of the total power transferred to the loads), constrained to the set $\varphi(u, \xi) = \0$ (the power flow equations), i.e., 
\begin{align*}
	\max\limits_{u, \xi} \quad & \gamma(\xi)\\
	\text{subject to} \quad &  \varphi(u, \xi) = \0.
\end{align*}

From direct application of the KKT optimality conditions \cite{Bertsekas2016}, it results that in a loadability limit the \emph{power flow Jacobian} $\varphi_u = \frac{\partial \varphi}{\partial u}$ becomes singular, i.e., $\det ( \varphi_u ) = 0$
(for details, see Chapter 7 in \cite{Cutsem1998}). 
Based on this, we adopt the following standard characterization for voltage stability of the grid.

\begin{definition} (Voltage stability region)
The voltage stability region of a power distribution network with one slack bus and $n$ PQ buses is the open connected set of power flow solutions that contains the flat voltage solution and where 
\begin{equation}
\det( \varphi_u ) \neq 0.
\label{eq:detpositive}
\end{equation}
\end{definition}

The assessment of voltage stability (and of the distance from voltage collapse) therefore requires the computation of the power flow Jacobian $\varphi_u$. 
In the next subsection, we show how this can be done under our modeling assumptions.

\subsection{The power flow Jacobian in the branch flow model}

When the branch flow model is adopted, $\varphi_u$ takes the form
\begin{equation}
	\varphi_u = 
	\begin{bmatrix}
		-A & \mathbf{0} & -\Pi[r] & \mathbf{0} & -\mathbf{e}_1 & \mathbf{0}\\
		\mathbf{0} & -A & -\Pi[x] & \mathbf{0} & \mathbf{0} & -\mathbf{e}_1\\
		-2[r] & -2[x] & [r]^2 + [x]^2 & A_2^\t & \mathbf{0} & \mathbf{0}\\
		2\left[\overline{p}\right] & 2\left[\overline{q}\right] & -\left[\Delta^\t v\right] & -\left[\ell\right] \Delta_2^\t & \mathbf{0} & \mathbf{0}
	\end{bmatrix}
\label{eq:pfj}
\end{equation}
where $\Delta_2$ and $A_2$ are the matrices obtained by removing the first row from $\Delta$ and $A$, respectively, and where $\mathbf{e}_1$ is the first canonical base vector.

We define the following $n \times n$ matrix, that we denote as the \emph{reduced power flow Jacobian}.
\begin{multline}
	 \varphi_u' = \left[ \Delta^\t v \right] + 2\left[\overline{p}\right]A_2^{-1}[r] + 2\left[\overline{q}\right]A_2^{-1}[x]  \\
- [\ell] \Delta_2^\t(A_2^T)^{-1}  \left( [r]^2 + 2 [r] A_2^{-1} [r] + [x]^2 + 2 [x] A_2^{-1} [x] \right)
\label{eq:rpfj}
\end{multline}

The following result shows the merits of the reduced power flow Jacobian.

\begin{theorem}
Consider the power flow Jacobian \eqref{eq:pfj} and the reduced power flow Jacobian \eqref{eq:rpfj}
of a power distribution network with one slack bus and $n$ PQ buses, described by the relaxed branch flow model.
We have:
\begin{itemize}
\item[i)] $\det( \varphi_u ) =  \det( \varphi_u' )$.
\item[ii)] $\det( \varphi_u' ) > 0$ in the voltage stability region.
\end{itemize}
\label{thm:voltagestabilityregion}
\end{theorem}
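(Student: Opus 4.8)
The plan is to establish (i) by collapsing $\varphi_u$ onto $\varphi_u'$ through two successive block eliminations, and then to deduce (ii) from (i) by a connectedness argument. Throughout I would identify each edge $(\pi(j),j)$ with its child $j$, so that $\Pi$ has a zero first row (the root has no parent edge) and $\Pi_2 = I_n$; the matrix $A_2$ is then the reduced incidence matrix of the tree, hence invertible with $\det A_2 = \pm 1$.

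First I would remove the columns for $p_0,q_0$ together with the two node-$0$ power-balance rows. Since the $p_0$ (resp.\ $q_0$) column equals $-\mathbf{e}_1$ and is supported only on the node-$0$ active (resp.\ reactive) equation, a generalized Laplace expansion along these two columns factors out the block $-I_2$ and leaves the $4n\times 4n$ minor $P$ obtained by striking those rows and columns from \eqref{eq:pfj}. The decisive second step is a Schur complement of $P$ onto the $(v_1,\dots,v_n)$-block: taking the $(\overline{p},\overline{q},\ell)$-columns against the active/reactive/voltage-drop rows as the invertible $3n\times 3n$ pivot $P_{11}$, I would solve the system whose only nonzero right-hand block is $A_2^\t$, i.e.\ $-A_2 a-\Pi_2[r]c=\mathbf{0}$, $-A_2 b-\Pi_2[x]c=\mathbf{0}$, $-2[r]a-2[x]b+([r]^2+[x]^2)c=A_2^\t$. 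With $\Pi_2=I_n$ this gives $c=W^{-1}A_2^\t$ for $W:=[r]^2+[x]^2+2[r]A_2^{-1}[r]+2[x]A_2^{-1}[x]$, and substitution shows the resulting Schur complement equals $\varphi_u' W^{-1}A_2^\t$, while $\det P_{11}=\det(A_2)^2\det W$. Multiplying the factors gives $\det P=\det(A_2)^3\det\varphi_u'$; since in this convention $\det A_2=(-1)^n$ and the Laplace expansion contributes a compensating sign $(-1)^n$, these combine to the claimed $\det\varphi_u=\det\varphi_u'$. The hard part is exactly this bookkeeping: carrying out the nested elimination so that the $\Pi_2$-terms collapse into the form of \eqref{eq:rpfj}, and checking that all the $\pm 1$ factors cancel. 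Because $A_2^{-1}$ has integer entries, both determinants are polynomials in the operating point, so it suffices to prove the identity on the open dense set where $W$ is invertible and extend it by continuity, disposing of the points where the Schur complement degenerates.

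For (ii) I would argue by continuity: $\det\varphi_u'$ is a polynomial in the operating point, and by (i) it coincides with $\det\varphi_u$, which does not vanish anywhere in the voltage stability region. Since that region is connected and contains the flat voltage solution, the nonvanishing continuous function $\det\varphi_u'$ keeps a constant sign on it, so it is enough to evaluate it at the flat solution. There $v=\mathbf{1}$ and $\overline{p}=\overline{q}=\ell=\mathbf{0}$, hence $[\Delta^\t v]=I_n$ and the three remaining terms of \eqref{eq:rpfj} vanish, giving $\varphi_u'=I_n$ and $\det\varphi_u'=1>0$; therefore $\det\varphi_u'>0$ throughout the region.
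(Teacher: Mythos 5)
Your proposal is correct and takes essentially the same route as the paper: remove the $p_0,q_0$ columns together with the two node-$0$ balance rows, then eliminate $(\overline{p},\overline{q},\ell)$ by a Schur complement onto the $v$-block---the paper organizes this as two nested Schur complements (pivot $\diag(-A_2,-A_2)$ on the $4n\times 4n$ matrix, then pivot $W$ on the resulting $2n\times 2n$ matrix), which compose to exactly your single $3n\times 3n$ elimination---and your part (ii) is the same flat-solution-plus-connectedness argument. One small repair to your closing continuity step: $W$ depends only on the line parameters $(r,x)$ and not on the operating point, so if $W$ were singular the set of operating points where your pivot is invertible would be empty rather than dense, and the polynomial-identity argument must instead be run in the joint parameter-and-state space, where $\det W$ is a not-identically-zero polynomial in $(r,x)$---an immediate fix, and even so your handling of pivot invertibility is more explicit than the paper's, which merely asserts it.
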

\begin{proof}
\emph{i)\ }
We first remove the last two columns of $\varphi_u$ together with the $1$-st and $(n+2)$-nd rows, obtaining a new matrix $\varphi_u^{*}$, whose determinant is equal to $(-1)^n \det(\varphi_u)$. 
Then, we apply the Schur complement twice to the matrix $\varphi_u^{*}$ and after some basic matrix manipulations obtain $\varphi_u'$, which satisfies $\det (\varphi_u') =  (-1)^n \det(\varphi_u^{*}$).
In each of the two Schur complements, the matrix (initially of dimensions $4n \times 4n$ and then of dimensions $2n \times 2n$) is divided into four blocks of equal dimensions, and the upper-left block is the invertible one.\\
\emph{ii)\ } 
In the flat voltage solution, $\varphi_u' = \left[ \Delta^T v \right]= \left[ \Delta^T \mathbf{1} \right] = \left[\mathbf{1}\right]$, therefore $\det(\varphi_u') = 1$.
Moreover, in a loadability limit, $\det(\varphi_u') = 0$. Since the determinant is a continuous function of the grid variables, in order to be in the voltage stability region, the determinant needs to remain positive.
\end{proof}

Theorem~\ref{thm:voltagestabilityregion} shows that the reduced power flow Jacobian $\varphi_u'$ is an effective tool for the voltage stability analysis. 
In particular, i) shows that studying the reduced power flow Jacobian is completely equivalent to studying the original power flow Jacobian, while
ii) provides a more precise characterization of the region where the grid voltages are stable.

\section{Voltage stability monitoring}
\label{sec:margin}

In this section we first propose an approximation of the determinant of the reduced power flow Jacobian, and then, based on this approximation, we propose a voltage stability index to quantify the distance of the power system from voltage collapse.

\subsection{Determinant approximation}

In Fig.~\ref{fig:datajacobian} we represent the numerical value of $\varphi_u'$ for two levels of loadability of the IEEE test feeder described in Section~\ref{sec:numerical}.
In the left panel, the operating point of the system is close to the flat voltage solution, while in the right panel, the grid is operated close to a loadability limit.
\begin{figure}[tb]
	\begin{center}
		\includegraphics[width=0.49\columnwidth]{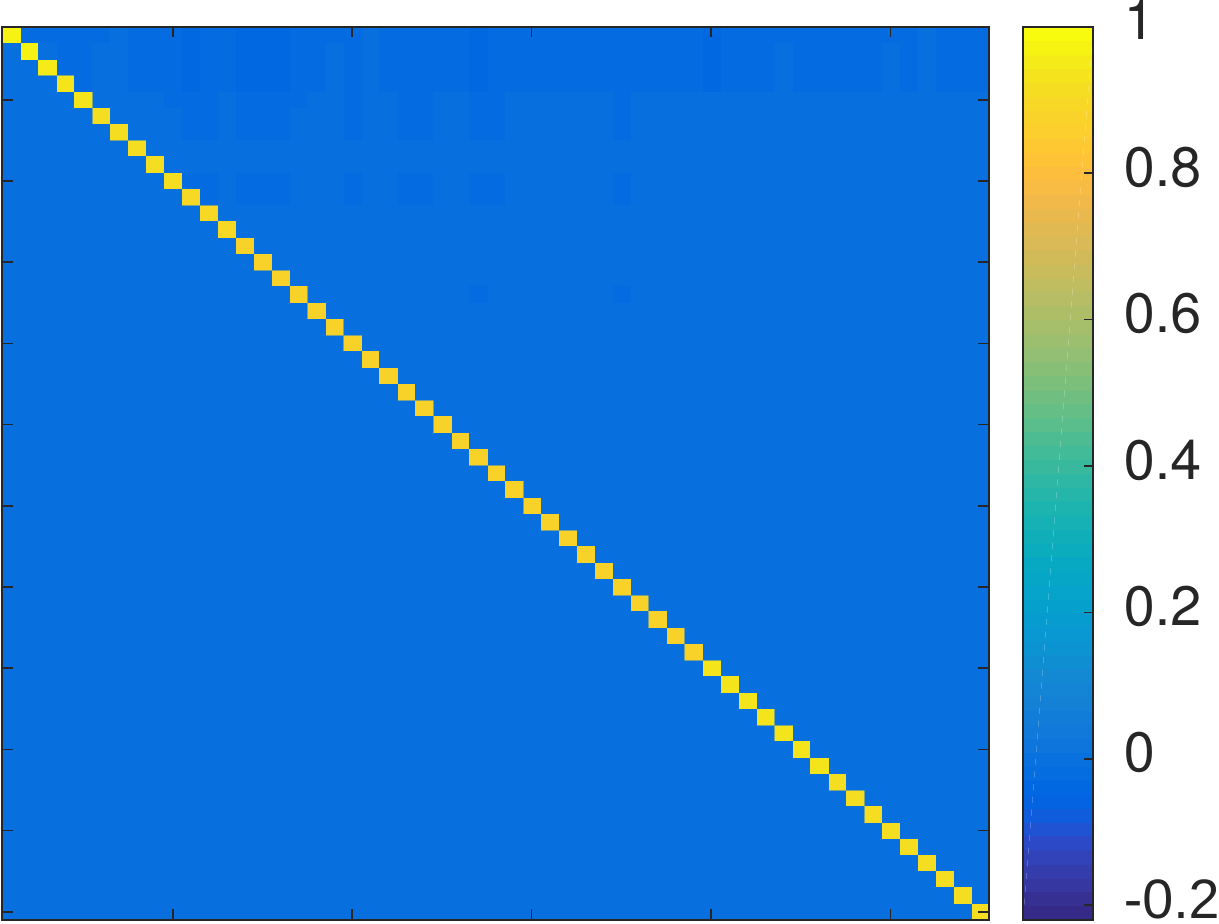}\hspace{\stretch{1}}
		\includegraphics[width=0.49\columnwidth]{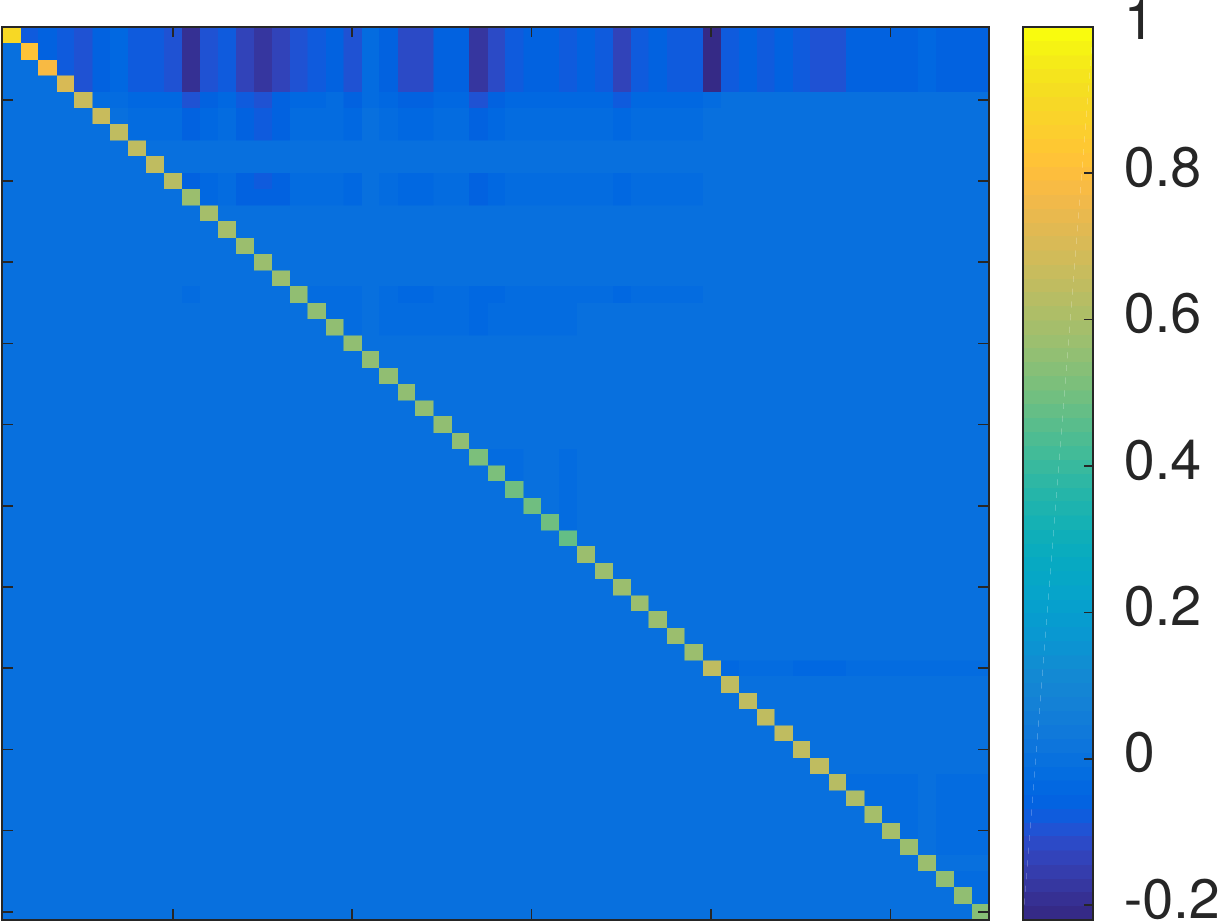}
	\caption{The value of the elements in the reduced power flow Jacobian for two levels of loadability.}
	\label{fig:datajacobian}
	\end{center}
\end{figure}

Direct inspection of the reduced power flow Jacobian $\varphi_u'$ shows that, for realistic parameter values and operating conditions, its off-diagonal elements (and in particular its lower-diagonal elements) are significantly smaller than the diagonal elements.
The approximation proposed in this paper consists in ignoring them.

The diagonal elements of $\varphi_u'$ are equal to 
\begin{equation}
\varphi_{u,jj}' = v_i - 2p_{ij}r_{ij} - 2q_{ij}x_{ij} - 2\ell_{ij}(r_{ij}\overline{r_{0i}} + x_{ij}\overline{x_{0i}})
\label{eq:diagelems}
\end{equation}
where $i = \pi(j)$, while $\overline{r_{0i}}$ and $\overline{x_{0i}}$ are the sum of the resistances (respectively, of the inductances) of the lines connecting node $0$ to node $i$.

By ignoring the off-diagonal elements, an approximation of $\det(\varphi_u')$ is obtained as the product of the elements on the diagonal defined in \eqref{eq:diagelems}:
\begin{equation}
	{\det}_\text{approx} = \prod_{j \in \{1,...,n\}} \varphi_{u,jj}'.
	\label{eq:detapproxprod}
\end{equation}

\begin{remark}
The index ${\det}_\text{approx}$ in \eqref{eq:detapproxprod} can be regarded as a natural generalization of the well-known voltage stability index for 2-bus networks (i.e., $n=1$) to arbitrary tree networks.
To see this, recall that for a 2-bus network
at the loadability limit, the magnitude of the load voltage is equal to the magnitude of the voltage drop on the line \cite{Cutsem1998}, i.e. 
\begin{equation}
v_1 = |\xi |^2, \quad \delta = \frac{1}{u_0^*} (r_{01} + \jay x_{01})(p_{01} - \jay q_{01})
\label{eq:vancutsemcondition}
\end{equation}
where $\xi$ and $u_0$ are the complex voltage drop on the line and the complex voltage of bus 0, respectively, and $u_0^*$ is the complex-conjugate of $u_0$.

By expanding $v_1$ as $|u_0 - \xi|^2$, condition \eqref{eq:vancutsemcondition} can be rewritten as $|u_0|^2 - 2 \realpart(u_0^* \xi) = 0$, and therefore, using the definition of $\xi$, as
$$
|u_0|^2 - 2 \realpart[(r_{01} + \jay x_{01})(p_{01} - \jay q_{01})] = 0.
$$
which is identical to $v_0 - 2p_{01}r_{01} - 2q_{01}x_{01} = 0$. 

This last quantity can also be written in terms of the reduced Jacobian $\varphi_u'$ and its approximation ${\det}_\text{approx}=\varphi_{u,11}'$ as
\begin{equation}
0=v_0 - 2p_{01}r_{01} - 2q_{01}x_{01} = \varphi_{u,11}' = \det(\varphi_u').
\label{eq:twobuscondition}
\end{equation}
The latter quantity \eqref{eq:twobuscondition} goes to zero at the loadability limit.

Thus, for $n=1$, our expression \eqref{eq:detapproxprod} does not introduce any approximation and recovers the well-known 2-bus condition. For an arbitrary tree network, ${\det}_\text{approx}$ is a natural generalization equal to the product of $n$ terms \eqref{eq:diagelems} similar to $\varphi'_{u,11}$, where in each of them an additional component accounting for the losses also appears. Each of these terms \eqref{eq:diagelems} corresponds to one edge of the network, therefore we have that ${\det}_\text{approx}=0$ when at least one $\varphi_{u,jj}'$, $j \in \{1,\ldots,n\}$, is equal to zero.
\end{remark}

In Section~\ref{sec:accuracy} and \ref{sec:numerical} we will analytically and numerically confirm that ignoring the off-diagonal elements results in a highly accurate approximation of the determinant in the voltage stability region.

\subsection{Voltage stability index}
\label{ssec:vsi}

Based on Theorem~\ref{thm:voltagestabilityregion}, the voltage stability region is defined as the region where $\det(\varphi_u') > 0$.
The numerical value of $\det(\varphi_u')$ provides a quantitative assessment of voltage stability, as larger values determine increased robustness with respect to parametric variation and fluctuations, and therefore encodes the \emph{distance} from voltage collapse (the boundary of the region where $\det(\varphi_u') > 0$).

In order to be useful for the practical assessment of voltage stability, a voltage stability index should take the same value for grids whose voltage stability is identical.
The following example provides some insight on this specification.

\begin{example}
Suppose we have a linear distribution feeder with $n+1$ nodes, where only node $1$ has positive power demand. Therefore $p_{01}>0$ and/or $q_{01} >0$ (so, $\ell_{01} > 0$), while for the other nodes $j \in \{2,...,n\}$, $p_{\pi(j) j} = q_{\pi(j) j} = \ell_{\pi(j) j} = 0$, and $v_1 = v_2 = ... = v_n$.

\begin{center}
\vspace{2mm}
\includegraphics[width=0.7\columnwidth]{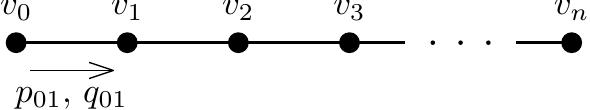}
\end{center}

The reduced power flow Jacobian is
\begin{equation*}
	\varphi_u' = 
	\begin{bmatrix}
		v_0-2p_{01}r_{01}-2q_{01}x_{01} & * & * & ... & * \\
		0 & v_1 & 0 &... & 0 \\
		0 & 0 & v_2 &...& 0 \\
		... & ... & ... & ... & ... \\
		0 & 0 & 0 & ... & v_{n-1}
	\end{bmatrix}
\label{eq:vsiex}
\end{equation*}

and its determinant can be explicitly calculated as 
\begin{equation*}
\det(\varphi_u') = (v_0-2p_{01}r_{01}-2q_{01}x_{01}) v_1^{n-1}
\end{equation*}

Because there are no power flows on the lines connecting the nodes ${2,\ldots,n}$ to node $1$, this grid is operationally equivalent to the 2-bus network composed just by the nodes $0$ and $1$. 
However, differently from the 2-bus network, since $v_1 < v_0 = 1$, the determinant of the $(n+1)$-bus network decreases exponentially in the number of nodes $n$.
\end{example}

The intuition from the above example can be generalized to arbitrary networks.
Recall that the determinant of a matrix is equal to the product of its eigenvalues.
It can be verified from \eqref{eq:rpfj} that, in the flat voltage solution, all the eigenvalues of $\varphi_u'$ are equal to 1.
For increasing power demands, all the eigenvalues get closer to the origin.
Since the number of eigenvalues is equal to the size of $\varphi_u'$, and thus to the size of the grid, larger networks (even more general than the line considered in the example) are thus naturally associated to exponentially smaller determinants. 

Based on this observation, we propose the scaled and normalized determinant 
\begin{equation}
\text{VSI} := \frac {\ln(\det(\varphi_u'))}{n}
\label{eq:vsimin}
\end{equation}
as a \emph{voltage stability index}.

Following the determinant approximation proposed in \eqref{eq:detapproxprod}, we then define the \emph{approximate voltage stability index} 
\begin{equation}
\text{AVSI} := \frac{\ln\left(\det_\text{approx}\right)}{n}
= \frac{1}{n}\sum_{j=1}^n h_j
\label{eq:avsi}
\end{equation}
where
\[
h_j := \ln\left(\varphi_{u,jj}'\right)
\]
and the terms $\varphi_{u,jj}'$ are defined in \eqref{eq:diagelems}.

\section{Computation of the AVSI}
\label{sec:compvsia}

In order to discuss some computational aspects of the proposed AVSI, we remark that each term $h_j$ in \eqref{eq:avsi} is a function of state variables that can be measured at node $j$.
In fact, by manipulating \eqref{eq:diagelems}, $h_j$ can be expressed as
\begin{equation}
h_j = \ln \left(
	v_j - \ell_{ij} \big( 
	r_{ij}(2\overline{r_{0j}} - r_{ij}) + x_{ij}(2\overline{x_{0j}} - x_{ij})
	\right)
\big),
\label{eq:equivdiagelem}
\end{equation}
which is a function only of the voltage magnitude $v_j$ at bus $j$ and of the squared current magnitude $\ell_{ij}$ on the power line connecting $j$ to its parent $i$.
Moreover, each term $h_j$ is only function of the local line parameters $r_{ij}$, $x_{ij}$ and of the line parameters $\overline{r_{0j}}$, $\overline{x_{0j}}$, which represent the electric distance of node $j$ from node $0$.

\subsection{Scalable centralized computation with linear complexity}
\label{subsec:cencomp}

Consider the case in which the entire grid state is available for centralized computation. 
This could be the case, for example, in distribution grids or islanded microgrids whose operation is monitored from a centralized location, to which all sensors send their real-time measurement.
It is also the case of numerical simulations of a power distribution grid, in which a designer is interested in evaluating the voltage stability of a multitude of different simulated loading scenarios.

The computation of the determinant of the $n \times n$ power flow Jacobian $\varphi_{u}'$ requires a computation time which is polynomial, precisely $O\left(n^3\right)$ when done via standard LU factorization \cite{Serre2010}, in the number of buses of the grid.

On the other hand, computing the proposed AVSI amounts to simply evaluating the arithmetic mean of the terms $h_j$ for all $j \in \{1,...,n\}$.
As the computation of each term $h_j$ requires constant time, the computational complexity of the AVSI is linear $O\left(n\right)$ with respect to the number of buses of the grid, resulting in a scalable and computationally efficient method also for large scale networks.

\subsection{Distributed computation}

Consider the case in which the sensor at each bus is also equipped with some computational power, and sensors are able to exchange information with (possibly a subset of) other sensors via a communication channel. 
In such a scenario, the proposed AVSI can be computed without relying on a centralized computation unit, therefore achieving increased robustness, scalability of the communication resources, and flexibility in case of network reconfiguration.

Recall that each $h_j$ depends on strictly local state measurements at bus $j$, local line parameters and the electrical distance between the nodes $0$ and $j$. The parameters $\overline{r_{0j}}$ and $\overline{x_{0j}}$ can be assumed to be known, or can be obtained via online estimation procedures \cite{Timbus2007,Gu2012} in a plug-and-play fashion. As a consequence, each $h_j$ can be computed locally, at bus $j$, without knowing the measurements at other buses or the entire electrical topology of the network.

The AVSI is then the algebraic mean of these local terms, and can therefore be computed in a distributed way by initializing each sensor state to the value $h_j$, and then running an \emph{average consensus protocol}  (see \cite{Bolognani2010} for a characterization of the family of algorithms that can be cast into this general-purpose protocol).

Average consensus protocols can be designed and tuned to converge (exponentially but possibly also in finite time) to the average of each sensor initial state in the presence of sparse communication graphs, time varying communication, and communication delays (see \cite{Xiao2004,Nedic2010,Bullo2018}).

In the following, we report a possible distributed algorithm that computes the AVSI in a distributed manner. The algorithm is to be executed in parallel by all the nodes in the set $N\backslash\{0\}$.
Given a communication graph, we denote by $\mathcal{N}_j$ the communication neighbors of node $j$ and by $d_j$ the communication degree of node $j$, i.e., the cardinality of the set $\mathcal{N}_j$.

\begin{algorithm}
	\caption{Distributed Computation of AVSI}
	\begin{algorithmic}
		\STATE{1. Each node $j$ computes its initial value $h_j$.}
		\STATE{2. Each node $j$ calculates its degree $d_j$.}
		\STATE{3. Each node $j$ sends $d_j$ and $h_j$ to its neighbors $\mathcal{N}_j$.}
		\STATE{4. Each node $j$ computes the weights \\
		\qquad\qquad $w_{jk} = 1/(1+\text{max}(d_j,d_k))$, \ $\forall k \in \mathcal{N}_j$ \\
		\qquad\qquad $w_{jj} = 1-\sum_{k \in \mathcal{N}_j}w_{jk}$.}
		\STATE{5. Each node $j$ updates its value $h_j$ as \\
		   \qquad\qquad $h_j \leftarrow w_{jj} h_j + \sum_{k \in \mathcal{N}_j} w_{jk} h_k$.}
		\STATE{6. Return to point 2.}
	\end{algorithmic}
	\label{alg}
\end{algorithm}

Direct application of the technical results in \cite{Xiao2005} guarantees exponential convergence of this algorithm to the AVSI if the union of the time-varying communication graph is connected.

\subsection{Hierarchical decomposition and recursive computation}

Power distribution grid are hierarchically structured in different levels, from medium voltage supra-regional and regional distribution grids, to low voltage local distribution grids \cite{Sallam2011}.
These different levels are often monitored independently, sometimes by different operators. 
In the following, we show that the proposed AVSI can be computed in a recursive way on this hierarchical structure. 

To formalize this idea, we introduce the following abstraction.
Given a set of nodes $N'$, let $\mathcal P(N')$ be a partition of $N'$, that is a set of sets such that $\bigcup_{N''\in \mathcal P(N')} = N'$ and all sets in $\mathcal P(N')$ have empty pair-wise intersection.
We start by partitioning the set of load buses $N\backslash \{0\}$, and we proceed recursively until we obtain trivial partitions (i.e., individual buses), as in Fig.~\ref{fig:hierachical}.

\begin{figure}[tb]
	\centering
\includegraphics[width=0.9\columnwidth]{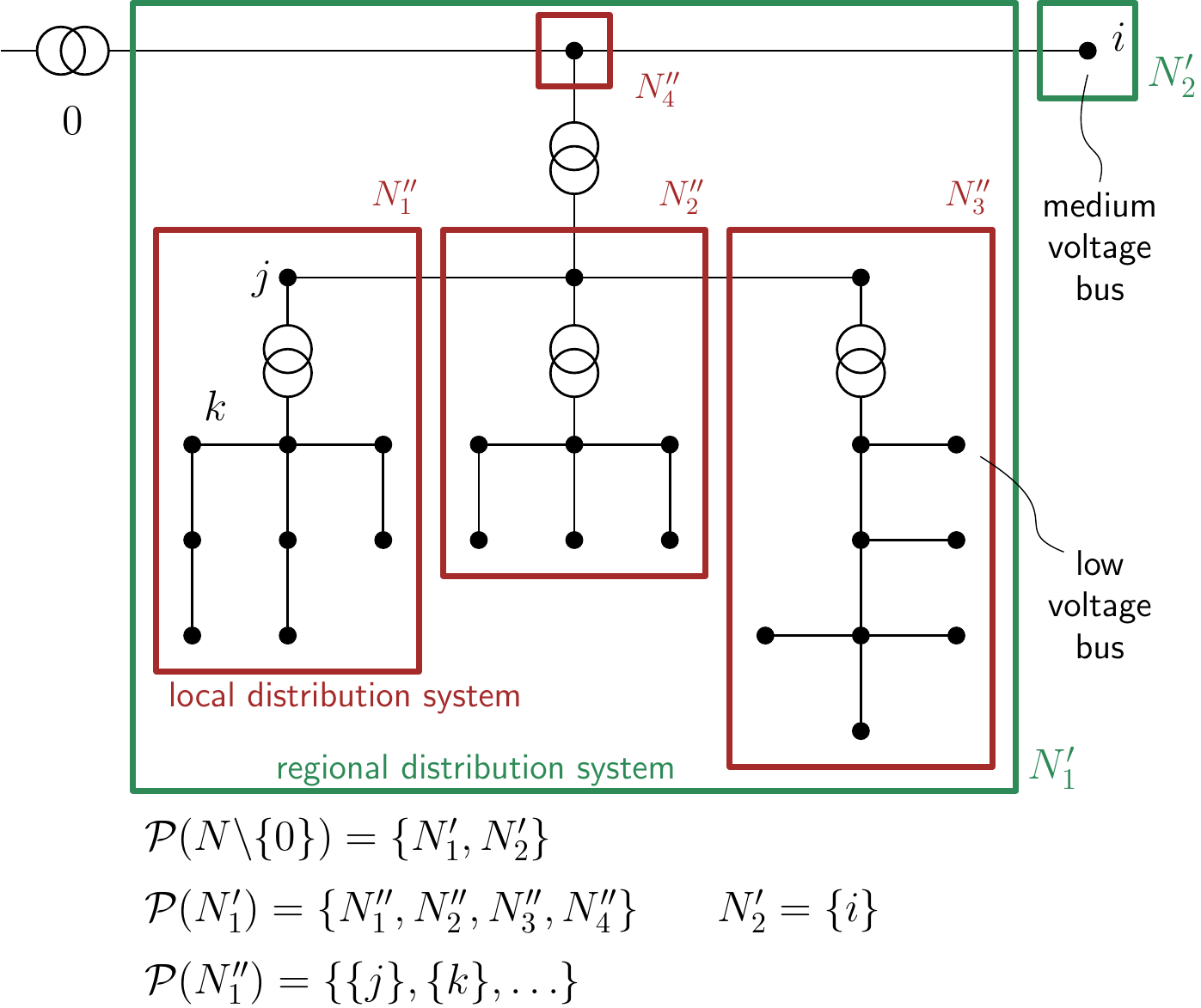}
\caption{An example of multi-level distribution network, with medium and low voltage sub-networks. The red and blue boxes show a possible hierarchical decomposition, in which nodes have been partitioned in areas. The proposed AVSI can be computed recursively on these partitions.}
\label{fig:hierachical}
\end{figure}

We consider the base case
\begin{align*}
n(N') &= 1 \\
H(N') &= h_j,
\end{align*}
when $N'$ is a single node $\{j\}$, and define, for any set $N'$ in the recursive partition, the following recursion step
\begin{align*}
n(N') &= \sum_{N'' \in \mathcal P(N')} n(N'') \\
H(N') &= \sum_{N'' \in \mathcal P(N')} H(N'').
\end{align*}

In other words, at each level of the hierarchy the quantities $H$ and $n$ are computed either based on the information coming from the operators of nested sub-grids (corresponding to a non-trivial subset $N''$) or by processing the sensor measurements (if $N''$ is the singleton subset $\{j\}$).

It is easy to show, using the properties of partitions, that the AVSI for the entire grid can be then recovered as
$$
\text{AVSI} = \frac{H(N\backslash\{0\})}{n(N\backslash\{0\})},
$$
where $N\backslash\{0\}$ represents the whole grid but the slack bus $0$.

In other words, each subnetwork can process the measurements coming from its sensors, and encode the necessary information in a compact piece of data that is then made available to the operator of the level immediately above.
Here, all these pieces of information are fused again, and forwarded upwards in the hierarchy. Ultimately, this procedure returns the AVSI for the entire grid.

\section{AVSI accuracy}
\label{sec:accuracy}

The accuracy of the proposed approximate voltage stability index $\text{AVSI}$ can be studied analytically under some extra assumptions on the operating regime of the distribution grid, namely under the assumption of mono-directional active and reactive power flows (from the slack node to the buses). 
Based on the adopted convention for the direction of the edges, this extra assumption can be formalized as follows.

\begin{assumption}
\label{ass:loads}
Active and reactive power flows on each line are nonnegative, i.e.,
\[
p_{ij}, q_{ij} \geq 0 \quad \forall (i,j) \in E
\]
\end{assumption}

In practical terms, having mono-directional power flows on the distribution grid corresponds to the most unfavorable case for voltage stability.

In the rest of this section we will make use of the following notation. We denote by $\varphi_{u,\text{diag}}'$ and $\varphi_{u,\text{off}}'$ the matrices that contain only the diagonal and off-diagonal elements of $\varphi_u'$, respectively. Moreover, we denote by 
\begin{equation}
\rho = \rho\left(\varphi_{u,\text{diag}}^{\prime -1}\varphi'_{u,\text{off}}\right)
\label{eq:rho}
\end{equation}
the spectral radius of $\varphi_{u,\text{diag}}^{\prime-1}\varphi'_{u,\text{off}}$, i.e. the maximum norm of its eigenvalues.

The results in this section build upon the mathematical theory of $Z$-matrices, $M$-matrices and $\tau$-matrices \cite{Mehrmann1984}:

\begin{definition} A matrix $A \in \mathbb{R}^{n \times n}$ is a 

\begin{itemize}
	\item $Z$-matrix if $A = \alpha I - B$, where $\alpha$ is a real number and $B$ is a nonnegative matrix.
	\item $M$-matrix if it is a $Z$-matrix and $\alpha \geq \rho(B)$.
	\item $\tau$-matrix if:
\begin{enumerate}
	\item[i)] Each principal submatrix of A has at least one real eigenvalue.
	\item[ii)] If $S_1$ is a principal submatrix of $A$ and $S_{11}$ a principal submatrix of $S_1$ then $\lambda_{min}(S_1) \leq \lambda_{min}(S_{11})$
	\item[iii)] $\lambda_{min}(A) \geq 0$
\end{enumerate}
where $\lambda_{min}$ denotes the smallest real eigenvalue.
\end{itemize}
\end{definition}

It can be verified by inspection of the sign pattern that the reduced power flow Jacobian $\varphi_{u}'$ is a $Z$-matrix for all operating points satisfying Assumption~\ref{ass:loads}.

In the following theorem, we present the result on the quality of the proposed approximate voltage stability index.

\begin{theorem}
In a power distribution network described by the relaxed branch flow model, with one slack bus and $n$ PQ buses, satisfying Assumption~\ref{ass:loads}, in the voltage stability region we have:
\label{thm:mainresult}
\begin{equation}
 \text{VSI} \leq \text{AVSI} \leq \text{VSI}  - \rho \, \ln(1 - \rho)
\label{eq:mainresult}
\end{equation}
where $\rho$ is defined in \eqref{eq:rho}.
\end{theorem}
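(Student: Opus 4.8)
The plan is to collapse both inequalities onto a single scalar, the log-determinant of a perturbation of the identity, and then bound it from each side. First I would factor the reduced Jacobian as $\varphi_u' = \varphi_{u,\text{diag}}' + \varphi_{u,\text{off}}' = \varphi_{u,\text{diag}}'\,(I + M)$, where $M = \varphi_{u,\text{diag}}^{\prime -1}\varphi_{u,\text{off}}'$ is exactly the matrix whose spectral radius is $\rho$. Since $n\,\text{VSI} = \ln\det(\varphi_u')$ and $n\,\text{AVSI} = \ln\prod_{j}\varphi_{u,jj}' = \ln\det(\varphi_{u,\text{diag}}')$, taking determinants, logarithms, and dividing by $n$ yields the identity $\text{VSI} - \text{AVSI} = \tfrac{1}{n}\ln\det(I+M)$, so the whole theorem reduces to two-sided control of $\ln\det(I+M)$. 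Under Assumption~\ref{ass:loads} the matrix $\varphi_u'$ is a $Z$-matrix, so its off-diagonal entries are nonpositive; invoking the $\tau$-matrix properties (in particular $\lambda_{\min}\ge 0$ together with the monotonicity of $\lambda_{\min}$ over principal submatrices, which forces each diagonal $1\times1$ block to be positive on the stability region) the diagonal is strictly positive. Hence $B := -M$ is entrywise nonnegative with zero diagonal, $\det(I+M) = \det(I-B)$, and $I-B$ is a nonsingular $M$-matrix with positive determinant. A continuity argument anchored at the flat solution, where $M = \0$ and thus $\rho = 0$, guarantees $\rho = \rho(B) < 1$ on the entire connected stability region, because $\det(I-B)$ cannot vanish before a loadability limit is reached.

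For the lower bound $\text{VSI}\le\text{AVSI}$ I would expand $\ln\det(I-B) = \trace\ln(I-B) = -\sum_{k\ge 1}\tfrac{1}{k}\trace(B^k)$, which converges since $\rho<1$. Because $B\ge\0$ entrywise, every power $B^k$ is nonnegative and therefore $\trace(B^k)\ge 0$, so $\ln\det(I-B)\le 0$, i.e. $\det(\varphi_u')\le\det(\varphi_{u,\text{diag}}') = \prod_j\varphi_{u,jj}'$. Dividing the corresponding logarithmic inequality by $n$ gives $\text{VSI}\le\text{AVSI}$. This is exactly the Hadamard-type determinant inequality $0<\det A\le\prod_i a_{ii}$ for $M$-matrices, and could be cited from the $M$-/$\tau$-matrix theory instead.

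The quantitative upper bound is the crux. Starting from the same expansion and using $\trace(B)=0$ to delete the $k=1$ term, I would write $-\ln\det(I-B) = \sum_{k\ge 2}\tfrac{1}{k}\trace(B^k)$. Since $B$ is real, each $\trace(B^k) = \sum_i \mu_i^k$ is real and bounded by $\sum_i |\mu_i|^k \le n\rho^k$, because every eigenvalue of $B$ has modulus at most $\rho$. Hence $-\ln\det(I-B)\le n\sum_{k\ge 2}\tfrac{\rho^k}{k}\le n\sum_{k\ge 2}\tfrac{\rho^k}{k-1}$, the last step merely replacing $\tfrac{1}{k}$ by the larger $\tfrac{1}{k-1}$; reindexing gives $\sum_{k\ge 2}\tfrac{\rho^k}{k-1} = \rho\sum_{m\ge 1}\tfrac{\rho^m}{m} = -\rho\ln(1-\rho)$, and dividing by $n$ delivers $\text{AVSI}-\text{VSI}\le-\rho\ln(1-\rho)$. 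I expect the main obstacle to lie not in this series manipulation but in rigorously securing its hypotheses uniformly over the stability region: that the diagonal of $\varphi_u'$ stays strictly positive (so that $\text{AVSI}$ and the factorization are well defined) and that $\rho<1$ throughout, both resting on the $Z$-/$M$-/$\tau$-matrix structure combined with connectedness to the flat solution. A minor side remark is that the stated estimate is deliberately the looser one; keeping the coefficient $\tfrac{1}{k}$ yields the sharper gap $-(\ln(1-\rho)+\rho)$ from the identical computation.
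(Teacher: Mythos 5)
Your proposal is correct in substance, and its first half --- the factorization $\varphi_u' = \varphi_{u,\text{diag}}'(I+M)$ with $M=\varphi_{u,\text{diag}}^{\prime-1}\varphi_{u,\text{off}}'$, the identity $\text{VSI}-\text{AVSI}=\tfrac{1}{n}\ln\det(I+M)$, and the continuity argument giving $\rho<1$ throughout the connected stability region --- is exactly the paper's setup. Where you genuinely diverge is in how the two inequalities are then obtained. The paper outsources both to the matrix-theory literature: from $\rho<1$ it concludes that $\varphi_u'$ is a nonsingular $M$-matrix via \cite[Theorem 1]{plemmons1977}, gets the upper bound from \cite[Theorem 2.6]{Ipsen2011}, and gets the lower bound by passing through $\tau$-matrices (\cite[Theorem 1]{Mehrmann1984}) and the Hadamard--Fischer inequality of \cite[Theorem 4.3]{Engel1976}. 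You instead prove both bounds from first principles with the single expansion $\ln\det(I-B)=-\sum_{k\ge 1}\trace(B^k)/k$ for $B=-M\ge \0$: entrywise nonnegativity gives $\trace(B^k)\ge 0$ and hence $\text{VSI}\le\text{AVSI}$ (an elementary proof of the Hadamard--Fischer inequality in precisely the case needed), while $\trace(B)=0$, $\trace(B^k)\le n\rho^k$, and the replacement $1/k\le 1/(k-1)$ reproduce exactly the constant $-\rho\ln(1-\rho)$. Your route is self-contained, shows transparently where that constant comes from, and, as you observe, yields for free the sharper gap $-(\rho+\ln(1-\rho))$; the paper's route is shorter on the page and ties the result to general $M$/$\tau$-matrix theory, which also underlies the conjectured refinement \eqref{eq:conjecture}.

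Two hypotheses deserve a more careful argument than you give them. (i) Positivity of the diagonal of $\varphi_u'$: you invoke ``$\tau$-matrix properties'', but at that stage nothing establishes that $\varphi_u'$ is a $\tau$-matrix --- in the paper this is a \emph{downstream} consequence of the $M$-matrix property, which itself presupposes the diagonal factorization, so as stated your justification is circular. The non-circular argument is by connectedness: along any path in the stability region starting at the flat solution, the $Z$-matrix $\varphi_u'$ can cease to be a nonsingular $M$-matrix only by passing through a singular matrix (for a $Z$-matrix $\alpha I - B$, the boundary case $\alpha=\rho(B)$ forces $\det = 0$ because $\rho(B)$ is an eigenvalue of $B$), and singularity is excluded by the definition of the region; nonsingular $M$-matrices then have positive diagonals. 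The paper is itself terse on this point, but your phrasing should not lean on the $\tau$-matrix structure. (ii) Your claim that $\rho<1$ holds ``because $\det(I-B)$ cannot vanish before a loadability limit'' implicitly needs the fact that if $\rho(B)$ reaches $1$ then $1$ is actually an eigenvalue of $B$; for a general matrix, spectral radius $1$ only places an eigenvalue somewhere on the unit circle. Since $B\ge\0$ this is exactly Perron--Frobenius, which the paper invokes explicitly and you should too. Neither point breaks the proof, but both are load-bearing.
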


\begin{proof}
We begin by proving that $\rho(\varphi_{u,\text{diag}}^{\prime -1}\varphi'_{u,\text{off}})<1$.

First notice that $\varphi_{u,\text{diag}}'$ is positive definite since $\varphi_{u,\text{diag}}' = I$ in the flat voltage solution and $\det(\varphi_{u,\text{diag}}^{\prime}) > 0$ in the voltage stability region. Therefore $\varphi_{u,\text{diag}}^{\prime -1}$ is well-defined. Now, since $\varphi_{u}^{\prime} = \varphi_{u,\text{diag}}^{\prime}(I + \varphi_{u,\text{diag}}^{\prime -1}\varphi_{u,\text{off}}^{\prime})$, we have that $\det(\varphi_{u}^{\prime}) = \det(\varphi_{u,\text{diag}}^{\prime})\det(I + \varphi_{u,\text{diag}}^{\prime -1}\varphi_{u,\text{off}}^{\prime})$.
In the flat voltage solution, $\varphi_{u,\text{diag}}^{\prime -1}\varphi_{u,\text{off}}^{\prime} = \mathbf{0}$ and in a loadability limit, $\det(I + \varphi_{u,\text{diag}}^{\prime -1}\varphi_{u,\text{off}}^{\prime}) = 0$. Thus, the power grid becomes unstable when an eigenvalue of $\varphi_{u,\text{diag}}^{\prime -1}\varphi_{u,\text{off}}^{\prime}$ arrives at $-1$.
Now, since $-\varphi_{u,\text{diag}}^{\prime -1}\varphi_{u,\text{off}}^{\prime}$ is non-negative, from the Perron-Frobenius Theorem \cite{Meyer2000} it has a positive real eigenvalue equal to the spectral radius $\rho(-\varphi_{u,\text{diag}}^{\prime -1}\varphi_{u,\text{off}}^{\prime})$. 
Therefore, $\varphi_{u,\text{diag}}^{\prime -1}\varphi_{u,\text{off}}^{\prime}$ has a negative real eigenvalue with magnitude equal to $\rho(\varphi_{u,\text{diag}}^{\prime -1}\varphi_{u,\text{off}}^{\prime})$. Hence, this is the eigenvalue that first arrives in $-1$. This implies that in the voltage stability region, $\rho(\varphi_{u,\text{diag}}^{\prime -1}\varphi_{u,\text{off}}^{\prime}) < 1$.

Via \cite[Theorem 1]{plemmons1977}, $\rho(\varphi_{u,\text{diag}}^{\prime -1}\varphi_{u,\text{off}}^{\prime})<1$ implies that $\varphi_{u}'$ is an $M$-matrix, and therefore the second inequality in \eqref{eq:mainresult} descends from \cite[Theorem 2.6]{Ipsen2011}.

Via \cite[Theorem 1]{Mehrmann1984}, every $M$-matrix is also a $\tau$-matrix. Therefore \cite[Theorem 4.3]{Engel1976} can be applied, obtaining the first inequality in \eqref{eq:mainresult}.
\end{proof}

Theorem~\ref{thm:mainresult} provides an upper bound on the VSI approximation error, given by $\rho \, \ln(1 - \rho)$ throughout the entire voltage stability region. 
A tighter bound can be conjectured, based on the observations in \cite{Ipsen2011}, which allow to replace $\rho \, \ln(1 - \rho)$ with  $(n_{\rho} / n) \, \rho \, \ln(1 - \rho)$, where $n_{\rho}$ is equal to the number of eigenvalues of $\varphi_{u,\text{diag}}^{\prime -1}\varphi_{u,\text{off}}^{\prime}$ whose magnitude is close to the spectral radius $\rho$. 
In our simulations we found that there is generally only one eigenvalue with magnitude close to the spectral radius, resulting in the following upper bound
\begin{equation}
 \text{AVSI} \leq \text{VSI} - \frac{1}{n} \, \rho \, \ln(1 - \rho)
\label{eq:conjecture}
\end{equation}

The lower bound $\text{VSI} \leq \text{AVSI}$ suggests that the determinant of the reduced power flow Jacobian may become zero before its approximation. 
However, as shown in the next section, the difference between the two indices is extremely small, making them effectively equivalent for practical purposes.

\section{Numerical validation}
\label{sec:numerical}

In this section we assess the quality of the proposed AVSI via numerical simulations on the modified IEEE 123-bus test feeder used in \cite{Bolognani2016,Wang2017}, and refer to \cite{github_approx-pf} for details.
This testbed contains an ensemble of balanced PQ loads with different power factors, connected via three-phase lines and cables with heterogeneous X/R ratio and shunt admittances.

\subsection{Quality of the approximation}
\label{subsec:qualityavsi}

\begin{figure*}[tb]
	\begin{center}
		\includegraphics[width=0.9\columnwidth]{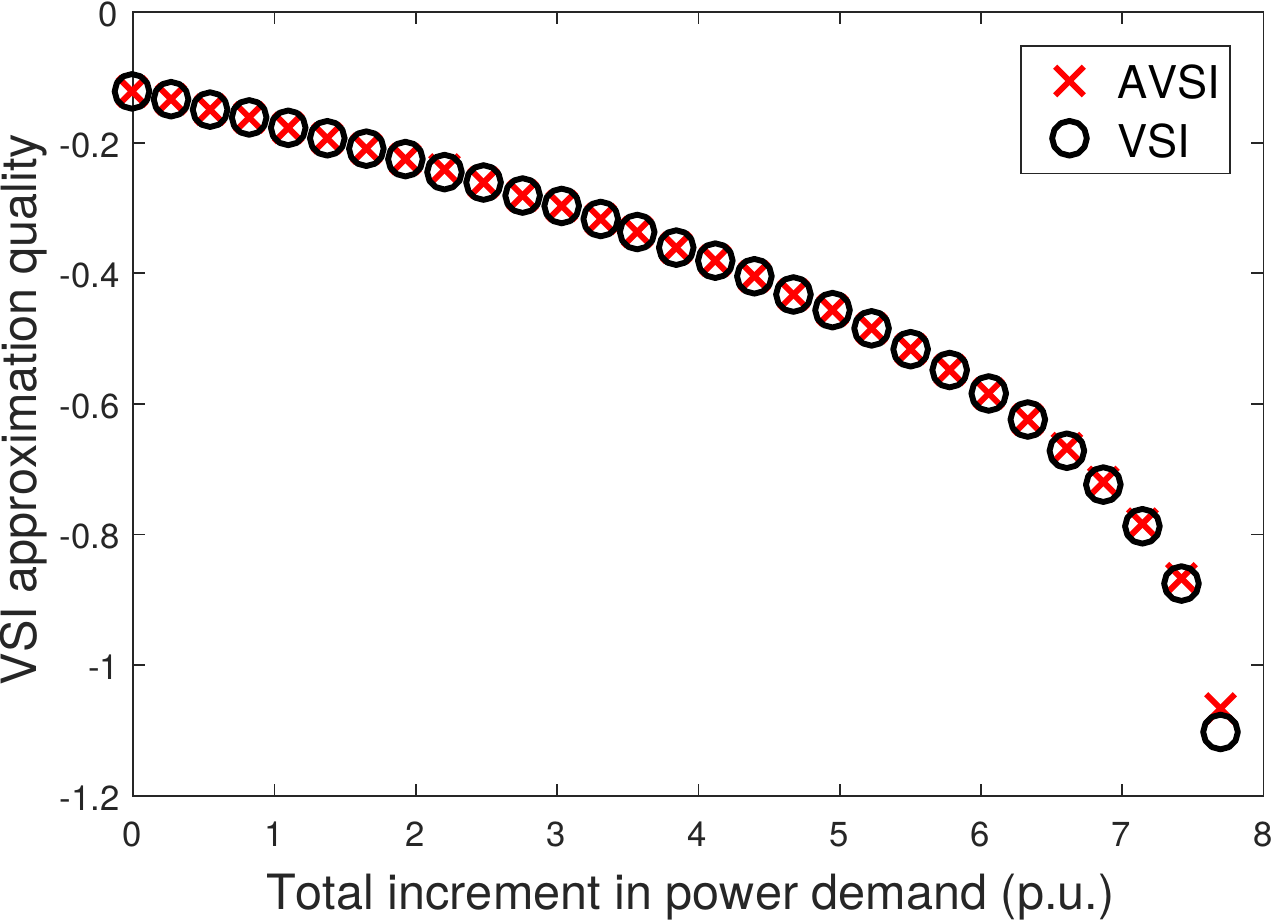}
		\qquad
		\includegraphics[width=0.9\columnwidth]{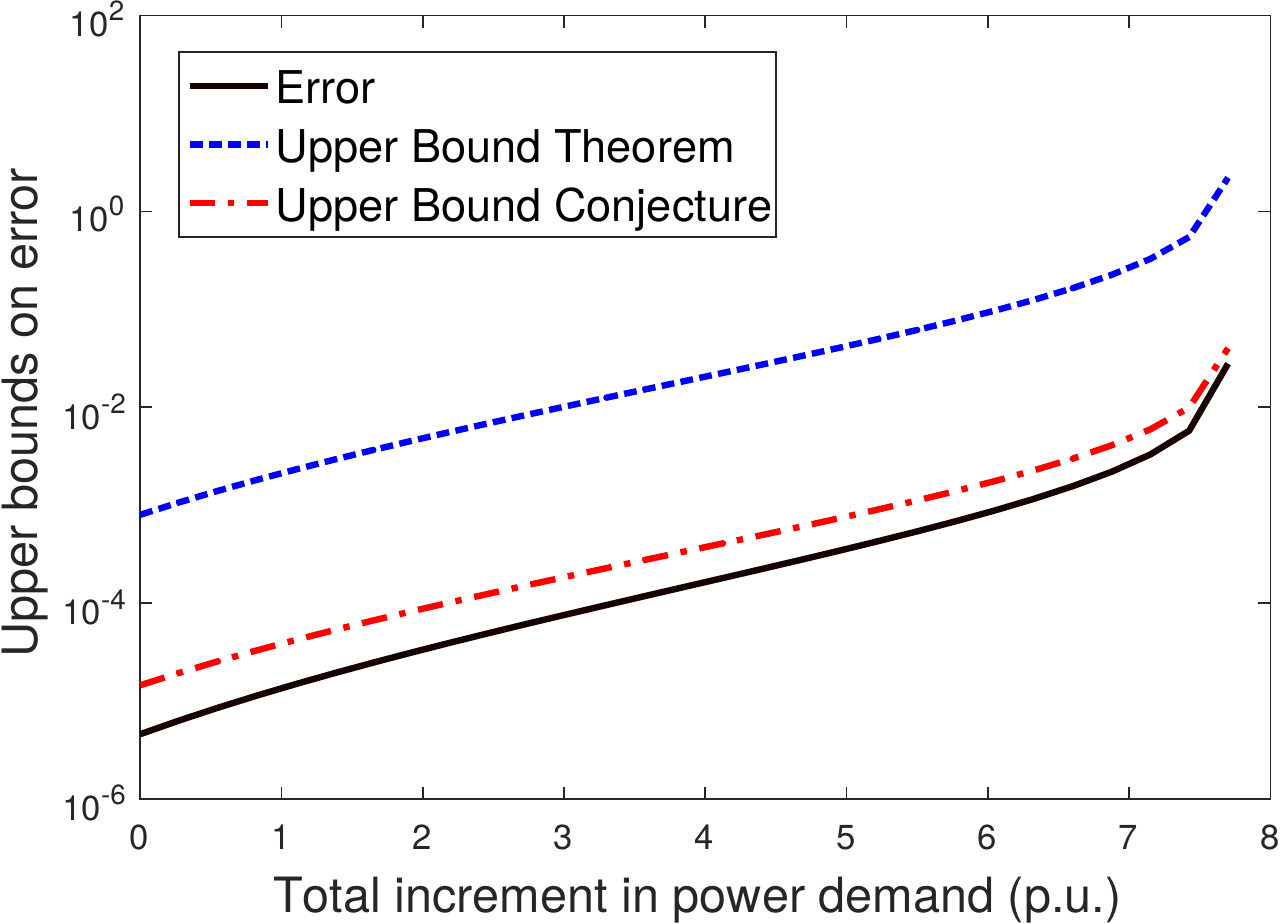}
	\end{center}
\end{figure*}

\begin{figure*}[tb]
	\begin{center}
		\includegraphics[width=0.9\columnwidth]{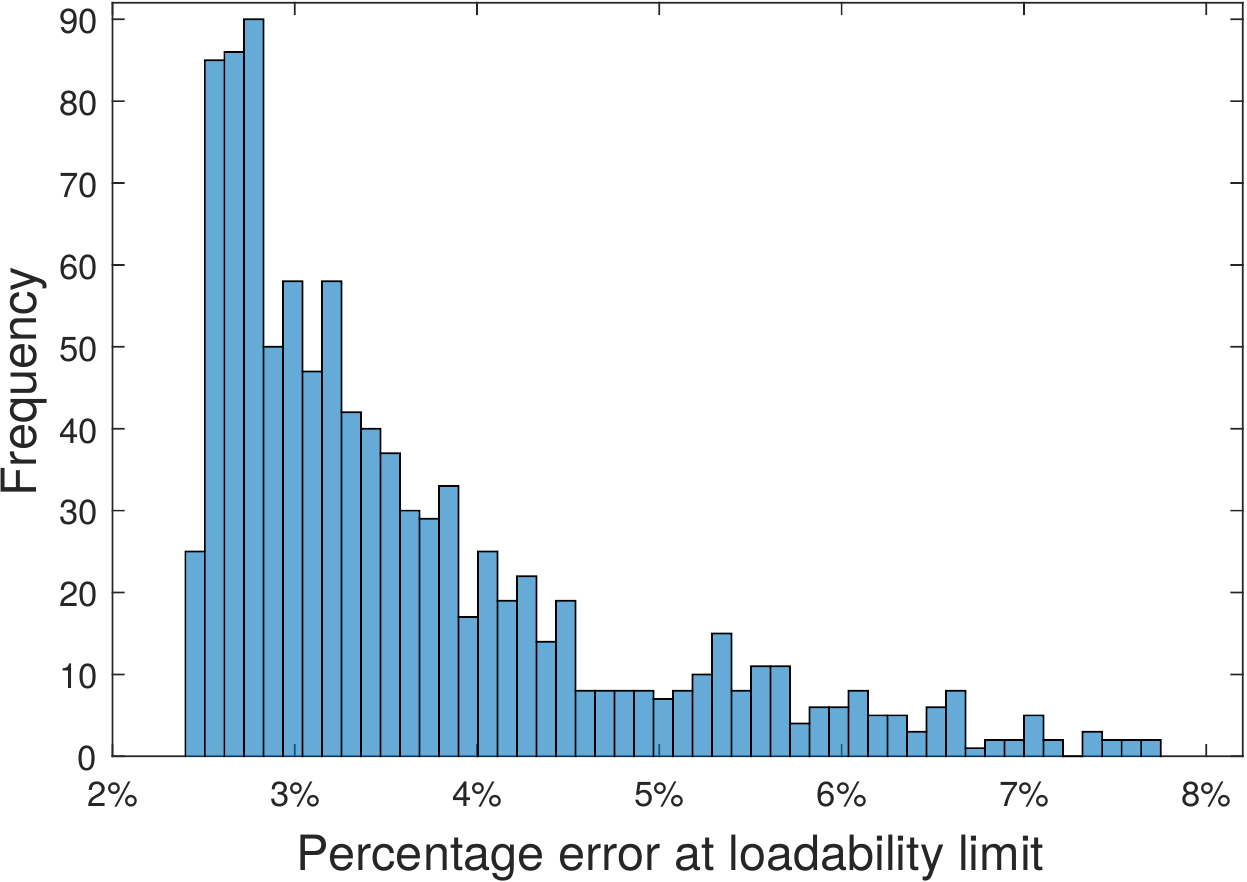}
		\qquad
		\includegraphics[width=0.9\columnwidth]{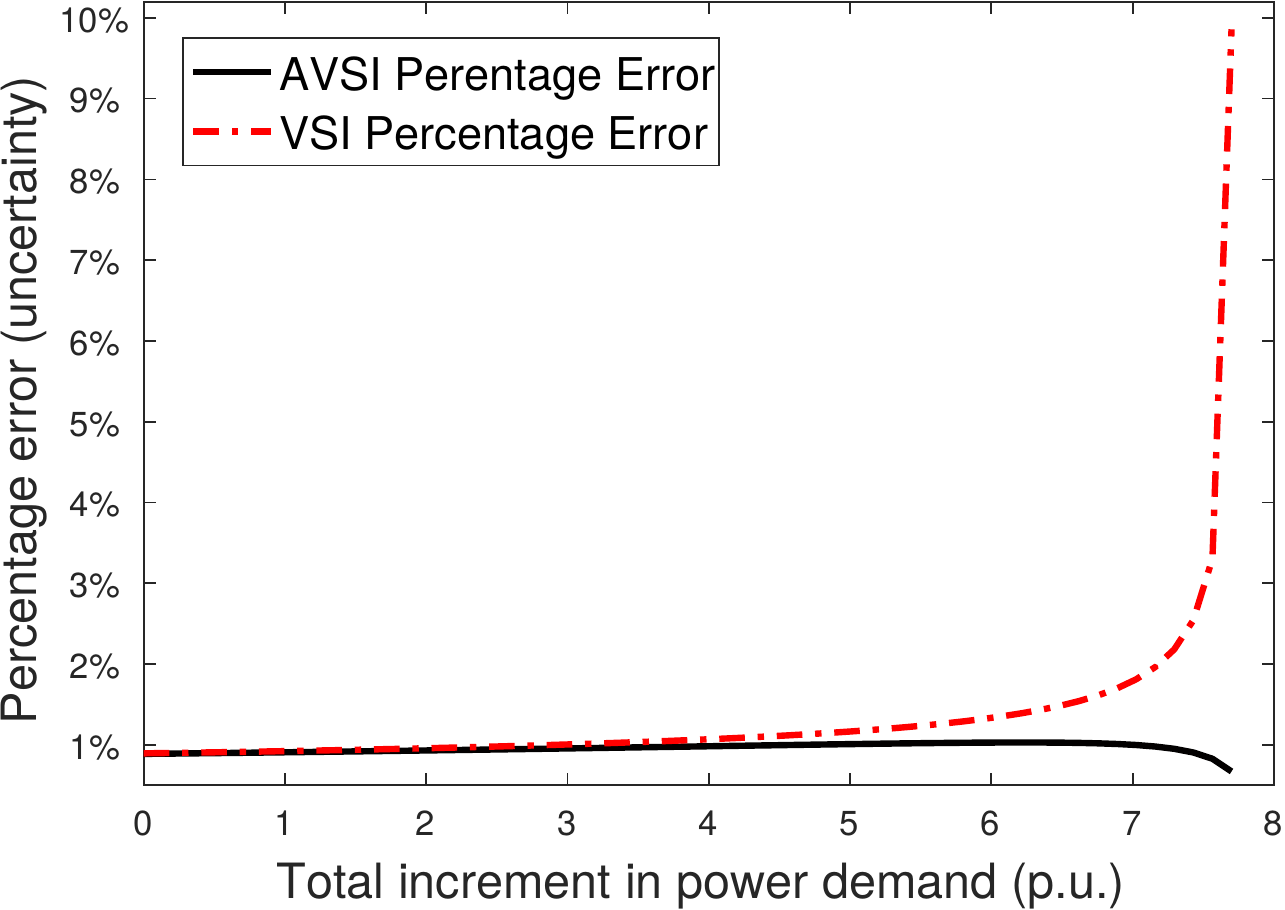}
	\caption{The top left subfigure shows the indices VSI and AVSI for a series of increasing demand levels, until voltage collapse. The top right subfigure shows the approximation error with the two proposed error bounds. The bottom left subfigure shows the percentage error histogram between VSI and AVSI at the loadability limit for $1000$ random loading scenarios. The bottom right subfigure shows the percentage error between the VSI computed with the exact line parameters, and the two indices AVSI and VSI, both computed with $25\%$ uncertainty in the line parameters.}
	\label{fig:avsi}
	\end{center}
\end{figure*}

To evaluate the quality of the approximation, we employ the continuation power flow method (as implemented in \cite{Zimmerman2011}) to obtain the power flow solutions of the grid for increasing power demands, starting from an operating point very close to the flat voltage solution, until the grid reaches a loadability limit (and therefore voltage collapse).
As the power demand increases, we compute both the VSI and the AVSI, and we evaluate the corresponding approximation error.

In the top left panel of Figure~\ref{fig:avsi} we represent the two indices VSI and AVSI when the power demand is increased uniformly across the entire grid.
Notice that while approaching the loadability limit, the negative slope of the VSI becomes very steep. 
Observe that the proposed approximation is almost exact up to very close to the loadability limit, where the AVSI becomes an upper bound to the exact VSI, as predicted by Theorem~\ref{thm:mainresult}.
The VSI approximation error and the two bounds presented in \eqref{eq:mainresult} and \eqref{eq:conjecture} are shown in the top right subfigure. Observe that the approximation error is monotonically increasing in the voltage stability region, starting from less than $10^{-5}$ at the base load and arriving to roughly $10^{-2}$ at the loadability limit.
Moreover, the conjecture \eqref{eq:conjecture} provides a very tight bound on the error.

We then repeat the same procedure for 1000 random loading scenarios.
Table~\ref{table} shows values of the VSI and of the AVSI (and the resulting approximation error denoted $\epsilon$) at the loadability limit, computed numerically via the continuation power flow method, while 
the bottom left subfigure of Figure~\ref{fig:avsi} illustrates its empirical distribution.

\begin{table}
\centering 
\footnotesize
\begin{tabular}{lccc}
\toprule
 Value & VSI & AVSI & Percentage error $\epsilon$ \\
\midrule
 Minimum & $-1.211$ & $-1.134$ & $2.42\%$ \\
 Average & $-1.106$ & $-1.065$ & $3.64\%$ \\
 Maximum & $-1.033$ & $-1.002$ & $7.74\%$ \\
\bottomrule 
\end{tabular}
\vspace{2mm}
\caption{VSI and AVSI at 1000 different loadability limits}
\label{table}
\end{table}

A threshold of approximately $-1$ seems to quantify very well the loadability limit of the grid for practical purposes.
At this point, where the determinant of the Jacobian is essentially zero (of the order of $e^{-n}$, according to the definition of the VSI), we are virtually at the point of voltage collapse.

It is important to notice that this threshold seems to be very insensitive to the specific loading pattern that is applied to the grid. 
Therefore, its scalar nature makes the index an effective indicator of distance from voltage collapse, as it is much simpler to identify (e.g., in simulations) what value of the index can be considered a safe voltage stability margin for a given grid, rather than trying to identify the region of voltage stable points in the high-dimensional space of complex bus power demands {\cite{Hiskens2001}}.

\subsection{Distributed generation}
\label{subsec:distribgen}

In this subsection we consider the presence of distributed generators (DGs).
The generators, modeled as constant power sources, account for nearly $20\%$ of the total number of buses and are uniformly spread throughout the network. A power factor of $0.9$ was applied to all the buses in the grid. 

We consider increasing DG penetration levels, defined as the ratio between the total apparent power at the DGs and the total apparent power at the loads \cite{Hoke2013}, from $10\%$ to $100\%$.
We consider $100$ random loading scenarios for each DG penetration level, and we consider increasing levels of loading (and, proportionally, generation). 
In Table~{\ref{tableDG}} we show the average values $\text{VSI}^{\text{AVG}}$ and $\text{AVSI}^{\text{AVG}}$ of, respectively, VSI and AVSI, at the loadability limit calculated via continuation power flow.
We also list the average and maximum value of the absolute and relative error ($\epsilon$ and $\epsilon_\%$, respectively) between the two indices at the loadability limit.

\begin{table}
\centering 
\footnotesize
\begin{tabular}{@{}ccccccc@{}}
\toprule
DG & $\text{VSI}^{\text{AVG}}$ & $\text{AVSI}^{\text{AVG}}$ & $\epsilon^{\text{AVG}}$ & $\epsilon^{\text{MAX}}$ & $\epsilon^{\text{AVG}}_\%$ & $\epsilon^{\text{MAX}}_\%$ \\
\midrule
 $10\%$ & $-1.06$ & $-1.02$ & $0.04$ & $0.07$ & $3.94\%$ & $8.49\%$ \\
 $20\%$ & $-1.07$ & $-1.03$ & $0.04$ & $0.07$ & $3.90\%$ & $8.29\%$ \\
 $30\%$ & $-1.06$ & $-1.02$ & $0.04$ & $0.07$ & $4.22\%$ & $8.42\%$ \\
 $40\%$ & $-1.06$ & $-1.01$ & $0.04$ & $0.07$ & $4.20\%$ & $7.92\%$ \\
 $50\%$ & $-1.04$ & $-1.00$ & $0.04$ & $0.07$ & $4.19\%$ & $7.95\%$ \\
 $60\%$ & $-1.04$ & $-1.00$ & $0.04$ & $0.07$ & $4.08\%$ & $9.07\%$ \\
 $70\%$ & $-0.97$ & $-0.93$ & $0.04$ & $0.07$ & $4.58\%$ & $10.51\%$ \\
 $80\%$ & $-0.82$ & $-0.78$ & $0.04$ & $0.07$ & $4.99\%$ & $9.84\%$ \\
 $90\%$ & $-0.62$ & $-0.58$ & $0.04$ & $0.07$ & $6.52\%$ & $13.74\%$ \\
 $100\%$ & $-0.35$ & $-0.30$ & $0.04$ & $0.07$ & $12.34\%$ & $18.51\%$ \\
\bottomrule 
\end{tabular}

\vspace{2mm}
\caption{Approximation error for different values of DG penetration}
\label{tableDG}
\end{table}

Notice that up to a penetration level of $60\%$ the indices VSI and AVSI at the loadability limit remain slightly below $-1$.
After this level, their values start increasing, up to at $-0.35$ and $-0.3$, respectively.
The reason for this phenomenon is apparent from \eqref{eq:diagelems}: the terms $\varphi_{u,jj}'$ corresponding to power lines that support a reverse power flow (towards the substation, i.e., $p_{ij}<0$ and/or $q_{ij}<0$) become larger as distributed generation becomes more significant.
Such information should be used when designing a practical threshold for the voltage stability index.

Notice moreover that the analysis of the approximation error proposed in Section~\ref{sec:accuracy} does not apply when reverse power flows are present, which is often the case when distributed generation is significant.
The analysis reported in Table~\ref{tableDG} shows however that the approximation error remains very small and constant (in absolute value) for all penetration levels.

\subsection{Uncertainty and robustness}

Up to this point the analysis has been carried out assuming that the feeder parameters contained in the vectors $r$ and $x$ are known and fixed.
However, in real-life scenarios, the values of these quantities may contain significant amount of uncertainties.
Consequently, in this subsection we consider up to $25\%$ uncertainty in the calculation or measurement of the line parameters and we show that the AVSI is a much more robust index compared to the VSI.
Based on the analytical expression of the AVSI, we identified the worst-case uncertainty as the case in which all power line impedances are over-estimated compared to their real value.

In the bottom right subfigure of Fig.~\ref{fig:avsi} we represent the percentage error between the VSI with uncertainty and the exact VSI, as well as the percentage error between the AVSI with uncertainty and the exact VSI.
As can be seen, the AVSI percentage error remains fairly steady at less than $1\%$ throughout the entire voltage stability region.
Meanwhile, the VSI percentage error drastically increases to almost $10\%$ close to the loadability limit.
We then considered 100 random loading scenarios and compute the same percentage errors for both AVSI and VSI at the loadability limits. While the the average and the maximum AVSI percentage errors are $0.70\%$ and $0.73\%$, respectively, the average and the maximum VSI percentage errors are $8.64\%$ and $9.85\%$, respectively.
This suggests that when such uncertainty is present, the index AVSI is much more robust than the VSI, and provides a better approximate of the correct stability index.

\section{Conclusions}
\label{sec:conclusions}

In this paper we considered the problem of assessing the voltage stability of a power distribution grid in a given operating point, based on a full observation of the state.
We propose an index that quantifies the distance from voltage collapse based on an accurate approximation of the determinant of the Jacobian of the power flow equations. 
The proposed index can be evaluated efficiently even for large networks, as it is suited to scalable, distributed, and hierarchical computation.
This, together with its numerical robustness with respect to parametric uncertainty of the grid model, makes the proposed index an effective solution for real-time monitoring of smart power distribution grids, for the assessment of voltage stability in large-scale randomized simulations, and also as a penalty function to include voltage stability constraints in optimal power flow programs.

Possible future developments of this methodology include the extension to more general classes of distribution grids (e.g., unbalanced and with voltage-regulated buses) and the derivation of upper bounds on the approximation error that can be evaluated a priori based on the grid parameters and that are valid also in the case of flow reversal.


\begin{IEEEbiography}%
[{\includegraphics[width=1in,height=1.25in,clip,keepaspectratio]{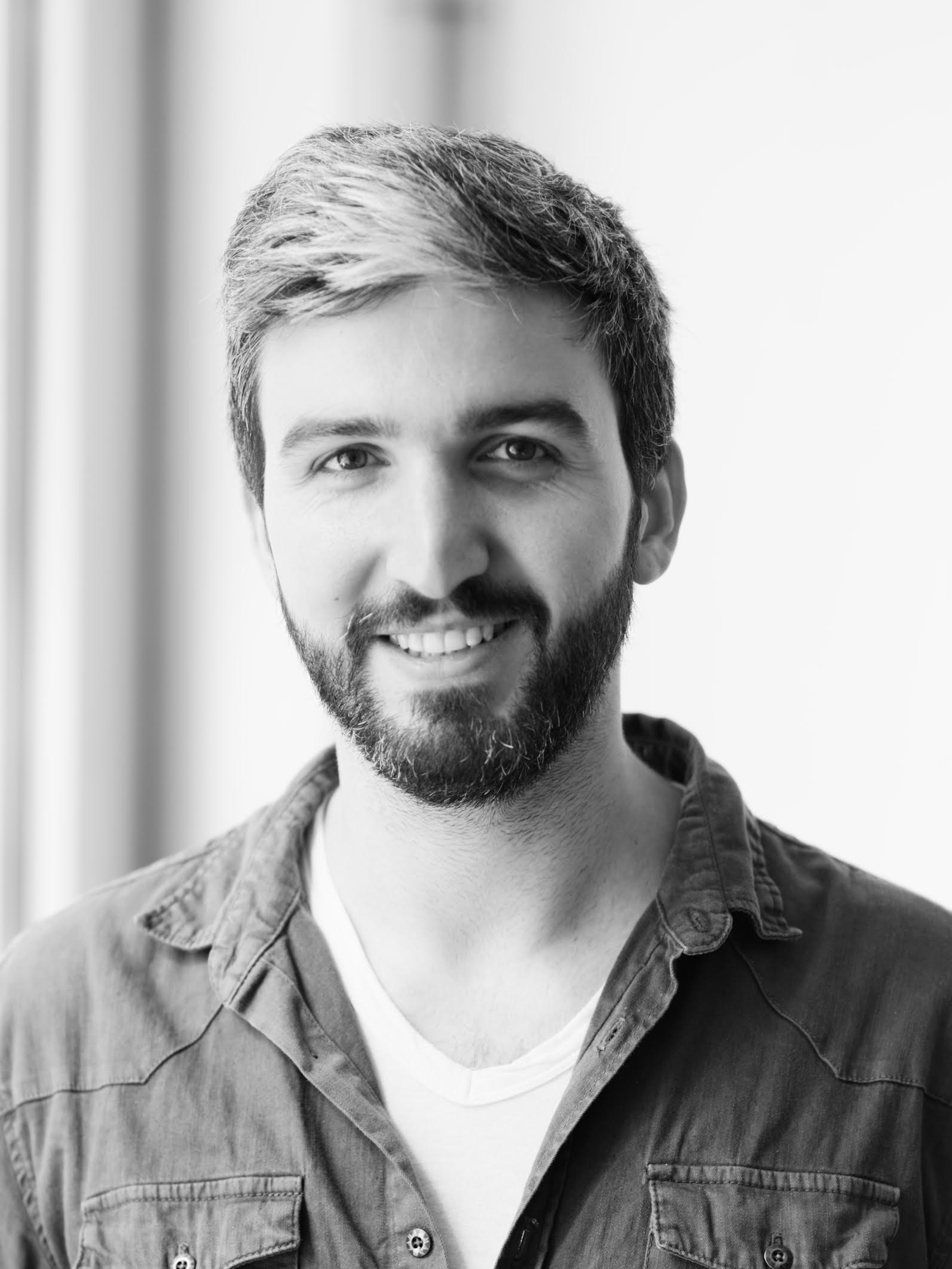}}]%
{Liviu Aolaritei} received the M.S. degree in Robotics, Systems and Control from ETH Zurich, Switzerland, in 2017, and the B.S. degree in Information Engineering from the University of Padova, Italy, in 2014. He was a visiting researcher at the Massachusetts Institute of Technology, USA, in 2017, and an intern in the ABB Corporate Research Center in Baden-D\"attwil, Switzerland, in 2016. He is currently a PhD student in the Automatic Control Laboratory at ETH Zurich.
\end{IEEEbiography}

\begin{IEEEbiography}%
[{\includegraphics[width=1in,height=1.25in,clip,keepaspectratio]{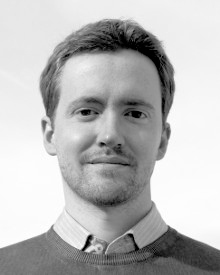}}]%
{Saverio Bolognani} received the B.S. degree in Information Engineering, the M.S. degree in Automation Engineering, and the Ph.D. degree in Information Engineering from the University of Padova, Italy, in 2005, 2007, and 2011, respectively. In 2006-2007, he was a visiting graduate student at the University of California at San Diego. In 2013-2014 he was a Postdoctoral Associate at the Laboratory for Information and Decision Systems of the Massachusetts Institute of Technology in Cambridge (MA). He is currently a Senior Researcher at the Automatic Control Laboratory at ETH Zurich. His research interests include the application of networked control system theory to smart power distribution networks, distributed control, estimation, and optimization, and cyber-physical systems.
\end{IEEEbiography}

\begin{IEEEbiography}%
[{\includegraphics[width=1in,height=1.25in,clip,keepaspectratio]{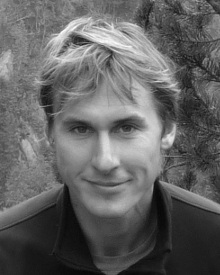}}]%
{Florian D\"{o}rfler} is an Assistant Professor at ETH Zurich. He received his Ph.D. degree in Mechanical Engineering from the University of California at Santa Barbara in 2013, and a Diplom degree in Engineering Cybernetics from the University of Stuttgart in 2008. From 2013 to 2014 he was an Assistant Professor at the University of California Los Angeles. His students were finalists for Best Student Paper awards at the European Control Conference (2013) and the American Control Conference (2016). His articles received the 2010 ACC Student Best Paper Award, the 2011 O. Hugo Schuck Best Paper Award, the 2012-2014 Automatica Best Paper Award, and the 2016 IEEE Circuits and Systems Guillemin-Cauer Best Paper Award. He is a recipient of the 2009 Regents Special International Fellowship, the 2011 Peter J. Frenkel Foundation Fellowship, and the 2015 UCSB ME Best Ph.D. award.
\end{IEEEbiography}

\end{document}